\DeclareMathOperator{\norm}{Norm}
\newcommand{\Q}{\mathbb Q}
\newcommand{\F}{\mathbb F}
\newcommand{\Z}{\mathbb Z}
\newcommand{\OK}{\mathcal O_K}
\newcommand{\fa}{\mathfrak a}
\newcommand{\fb}{\mathfrak b}
\newcommand{\fy}{\mathfrak y}
\newcommand{\fp}{\mathfrak p}
\newcommand{\fq}{\mathfrak q}
\newcommand{\tu}{\tilde u}
\newcommand{\OL}{\mathcal O_L}
\newtheorem{thm}{Theorem}
\newtheorem{lem}{Lemma}[section]
\newtheorem{prop}[lem]{Proposition}
\theoremstyle{definition}
\theoremstyle{remark}
\begin{document}

\title[]{
A Lucas-Lehmer approach 
to generalised 
Lebesgue-Ramanujan-Nagell equations 
}

\author{Vandita Patel}
\address{School of Mathematics, University of Manchester, Oxford Road, Manchester M13 9PL, United Kingdom
}
\email{vandita.patel@manchester.ac.uk}

\date{\today}

\keywords{Exponential equation, Lehmer sequences, primitive divisor theorem, Thue equation.}
\subjclass[2010]{Primary 11D61, Secondary 11D41, 11D59}

\begin{abstract}
We describe a computationally efficient approach to resolving equations of the form
$C_1x^2 + C_2 = y^n$ in coprime integers, 
for fixed values of $C_1$, $C_2$ subject to further conditions.  
We make use of a
factorisation argument and the Primitive Divisor Theorem 
due to Bilu, Hanrot and Voutier.
\end{abstract}

\maketitle

\section{Introduction} \label{intro}

Ramanujan \cite{Ramanujan}, in 1913, 
conjectured that the
 only positive integral solutions 
to the equation 
\[
x^2 + 7 = 2^n
\]
are
\[
(1,3),\quad  (3,4),\quad (5,5),\quad (11,7),\quad (181,15).
\]
This was proven by Nagell \cite{Nagell} in 1948, and 
the equation is now called the Ramanujan--Nagell equation. More generally,
equations of the form 
\begin{equation}\label{eqn:genRN}
C_1 x^2+C_2= C_3^n
\end{equation}
where $C_1$, $C_2$, $C_3$ are fixed non-zero integers are referred to 
as generalised Ramanujan--Nagell equations. Various special cases of \eqref{eqn:genRN}
have been considered by many authors using a variety of methods \cite{BPsurvey}.
For any such $C_1$, $C_2$, $C_3$, it is straightforward to reduce \eqref{eqn:genRN} 
 to solving $S$-unit equations. This allows 
us to conclude that the set of solutions to \eqref{eqn:genRN}
is finite
by a famous theorem of Siegel. It also gives
an effective algorithm for solving the equation.

\medskip

In this paper we consider the generalisation
\begin{equation}\label{eqn:LN}
C_1 x^2+C_2=y^n
\end{equation}
where $C_1$, $C_2$ are fixed, but $x$, $\lvert y \rvert > 1$, $n \ge 3$ are unknown. 
Here Baker's theory gives astronomical bounds
on the size of the solutions $(x,y,n)$, but does not alone
give a practical method for determining them. 
In fact, the earliest special case of \eqref{eqn:LN}
appears to be due to Victor Lebesgue \cite{Lebesgue} who in 1850 solved \eqref{eqn:LN}
for $C_1=C_2=1$. In 1948, Nagell \cite{Nagell} solved the cases $C_1=1$, $C_2=3$, $5$,
and it is now usual to refer to the equation 
\begin{equation}\label{eqn:Cohn}
x^2+C=y^n
\end{equation}
as the Lebesgue--Nagell equation.  In a series of papers (culminating in \cite{Cohn1}),
J.H.E.\ Cohn solved \eqref{eqn:Cohn} for many values of $C>0$.
After the appearance of the celebrated theorem of Bilu, Hanrot
and Voutier (BHV) on primitive divisors of Lucas and Lehmer sequences \cite{BHV},
Cohn revisited \eqref{eqn:Cohn} in \cite{Cohn2},
showing that BHV allows for an easy resolution
for $77$ values in the range $1 \le C \le 100$. 
The cases $C = 74$ and $C=86$ were solved by Mignotte and de Weger \cite{MignotteWeger}. Using the modular approach based on 
Galois representations of elliptic curves and modular forms, the cases $C=55$ and $C=95$ were solved by 
Bennett and Skinner \cite{BenS}.
The remaining $19$ values were dealt with in a pioneering
paper due Bugeaud, Mignotte and Siksek \cite{BMS}, 
which combines Baker's theory with the modular approach.
A related work which relies heavily on BHV
is due to Abu Muriefah, Luca, Siksek and Tengely \cite{ALST},
and adapts Cohn's method to the equation $x^2+C=2y^n$ (see also
\cite{Tengely}, \cite{Tengely2} for related equations).

In view of Cohn's work, it is natural to consider \eqref{eqn:LN},
which we refer to as the generalised Lebesgue--Ramanujan--Nagell equation.
We extend Cohn's method so that it applies in far greater
generality. 

More precisely, we study equations of the form:
\begin{equation}\label{eqn:main1}
C_1x^2 + C_2 = y^n, \quad x,~y \in \Z^{+}, \quad \gcd(C_1x^2,C_2,y^n)=1, \quad n \ge 3.
\end{equation}
We may assume without loss of generality that $n$ is an odd prime,
or that $n=4$.
We prove the following.
\begin{thm}\label{thm:main}
Let $C_1$ be a positive squarefree integer and $C_2$ a positive integer. Write 
$C_1C_2 = cd^2$ where $c$ is squarefree. We assume that $C_1C_2 \not\equiv 7
\pmod{8}$.  Let $p$ be an odd prime for which the equation 
\begin{equation}\label{eqn:main}
C_1x^2 + C_2 = y^p, \quad x,~y \in \Z^{+}, \quad \gcd(C_1x^2,C_2,y^p)=1, 
\end{equation}
has a solution $(x,y)$.
Then either, 
\begin{enumerate}[(i)]
\item $p \le 5$, or
\item $p=7$ and $y=3$, $5$ or $9$, or 
\item $p$ divides the class number of $\Q(\sqrt{-c})$, or
\item $p \mid \left( q - \left(\frac{-c}{q}\right)\right)$, where $q$ is some
prime $q\mid d$ and $q\nmid 2c$.  
\end{enumerate}
\end{thm}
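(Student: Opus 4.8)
The plan is to work in the ring of integers of $K = \Q(\sqrt{-c})$, where the hypothesis $C_1C_2 \not\equiv 7 \pmod 8$ ensures that $2$ is not a problematic prime and that we have good control over the factorisation. Starting from $C_1 x^2 + C_2 = y^p$, I would multiply through by $C_1$ to obtain $(C_1 x)^2 + C_1 C_2 = C_1 y^p$, and then, writing $C_1 C_2 = c d^2$, rewrite this as $(C_1 x)^2 + c d^2 = C_1 y^p$, so that in $\OK$ we have a factorisation
\[
(C_1 x + d\sqrt{-c})(C_1 x - d\sqrt{-c}) = C_1 y^p.
\]
The first task is to understand the ideal gcd of the two factors on the left: using $\gcd(C_1 x^2, C_2, y^p) = 1$ together with $c$ squarefree and the congruence condition modulo $8$, I would show that this gcd is supported only on primes dividing $2c$, and in fact that after pulling out a controlled factor one gets $\mathfrak{a}^2 = \mathfrak{b}^p$ for ideals $\mathfrak{a}, \mathfrak{b}$, where $\mathfrak{b}$ is (up to the controlled part) the ideal generated by $y$. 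Since $\gcd(2,p)=1$, the ideal $\mathfrak{b}$ itself is then a $p$-th power times something bounded; chasing this through, the obstruction to $\mathfrak{b}$ being principal — equivalently to $(C_1 x + d\sqrt{-c})$ being, up to units and bounded factors, a perfect $p$-th power of an element — is measured by the class group of $K$. If $p \nmid h_K$, this obstruction vanishes and we may write $C_1 x + d\sqrt{-c} = (\text{unit}) \cdot \gamma^p$ (again up to an explicitly bounded correction coming from the primes dividing $2c$ and from $C_1$ itself), which puts us in case (iii)'s complement. Here the primes $q \mid d$ with $q \nmid 2c$ are exactly the places where the local factorisation can fail to be a clean $p$-th power unless $p \mid \bigl(q - \bigl(\tfrac{-c}{q}\bigr)\bigr)$, which is the origin of case (iv): at such a $q$, the norm form argument forces $q$ to split or ramify in a way governed by the Legendre symbol, and the exponent matching demands the stated divisibility.

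Once $C_1 x + d\sqrt{-c} = \varepsilon \gamma^p$ with $\varepsilon$ in a finite explicit set (units times the bounded correction), I would take the conjugate equation and subtract, obtaining $2 d \sqrt{-c} = \varepsilon \gamma^p - \bar\varepsilon \bar\gamma^p$. Setting this up as a Lehmer-type relation: the pair $(\gamma, \bar\gamma)$, after normalisation, generates a Lehmer sequence $(u_n)$ (this is where the title's "Lucas–Lehmer approach" enters), and the equation above says that $u_p$ divides a fixed quantity, namely something bounded in terms of $d$ and $c$. Concretely, $\gamma - \bar\gamma$ and $\gamma\bar\gamma = \pm y$ are the defining data, and the subtracted equation exhibits $p$ as an index for which the corresponding Lehmer number has all its prime divisors among a fixed finite set (the primes dividing $2cd$).

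Now I invoke the Primitive Divisor Theorem of Bilu–Hanrot–Voutier: for $p > 30$, every Lehmer number $u_p$ has a primitive divisor, i.e.\ a prime not dividing the earlier terms nor the discriminant. Since our $u_p$ has no such freedom — its prime divisors are pinned to divide $2cd$, a fixed set — this is a contradiction for $p$ large, forcing $p \le 30$; the finitely many remaining primes $p \in \{7, 11, 13, 17, 19, 23, 29\}$ (having excluded $p \le 5$ as case (i)) are handled by the explicit tables of Lehmer sequences without primitive divisors in \cite{BHV}, which after matching against the constraint $\gamma\bar\gamma = \pm y$ leaves only the sporadic solutions recorded in case (ii), namely $p = 7$ with $y \in \{3,5,9\}$. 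The main obstacle, I expect, is the bookkeeping in the first paragraph: precisely tracking the "bounded correction" — how the primes dividing $2c$, the factor $C_1$, and the unit group interact — so that one genuinely lands in a Lehmer pair with integer (or half-integer) data to which BHV applies, rather than merely an element of $\OK$ with no sequence structure. Getting the congruence $C_1C_2 \not\equiv 7 \pmod 8$ to do exactly the work needed to rule out the anomalous behaviour of $2$ in $\Q(\sqrt{-c})$, and checking that the Lehmer-pair non-degeneracy conditions (that $\gamma/\bar\gamma$ is not a root of unity, equivalently $x \neq 0$ and $y$ not too small) hold outside the listed exceptions, is the delicate part; everything after the Lehmer sequence is produced is a citation to BHV plus a finite check.
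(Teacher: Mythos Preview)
Your overall architecture is right---descend in $\OK$ for $K=\Q(\sqrt{-c})$, use $p\nmid h_K$ to extract a $p$-th power, build a Lehmer pair, invoke Bilu--Hanrot--Voutier---but you have misplaced the mechanism behind case~(iv). You claim the primes $q\mid d$, $q\nmid 2c$ are ``the places where the local factorisation can fail to be a clean $p$-th power''. They are not: such a $q$ has $q\nmid C_1$ (else $q^2\mid C_1C_2$ with $C_1$ squarefree forces $q\mid\gcd(C_1,C_2)$) and $q\nmid y$ (else $q\mid\gcd(C_1x^2,C_2,y^p)$), so $q$ does not divide the product $(C_1x+d\sqrt{-c})(C_1x-d\sqrt{-c})=C_1y^p$ at all and there is nothing to obstruct. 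The descent goes through cleanly once $p\nmid h_K$, giving $C_1x+d\sqrt{-c}=\delta^p/C_1^{(p-1)/2}$ with $\delta\in\OK$.

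Case~(iv) instead comes out of the BHV step, where your ``this is a contradiction for $p$ large'' is the wrong conclusion. The Lehmer pair one actually uses is $(\alpha,\beta)=(\delta/\sqrt{C_1},\bar\delta/\sqrt{C_1})$ in $L=\Q(\sqrt{-c},\sqrt{C_1})$; this normalisation is what makes $\alpha\beta=y$ and $(\alpha+\beta)^2\in\Z$. Subtracting conjugates shows that $\tu_p=(\alpha^p-\beta^p)/(\alpha-\beta)$ divides $2d$. For $p>5$ with $(\alpha,\beta)$ \emph{not} $p$-defective, BHV supplies a primitive divisor $q$ of $\tu_p$; one checks $q\ne 2$ and $q\nmid c$ (from $q\nmid(\alpha-\beta)^2$), so $q\mid d$, $q\nmid 2c$, and then $\delta/\bar\delta$ having exact order $p$ in $(\OK/\fq)^\ast$---inside the norm-one subgroup when $q$ is inert---yields $p\mid\bigl(q-\bigl(\tfrac{-c}{q}\bigr)\bigr)$. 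That is case~(iv), not a contradiction. Only the \emph{defective} branch is matched against the BHV tables to produce cases~(i) and~(ii).
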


In Section~\ref{sec:comp}, we give an effective method that solves \eqref{eqn:main1}
for a given value of $n \ge 3$. Our algorithm relies on standard algorithms for
solving Thue equations and determining integral points on elliptic curves.
We implemented our method in \texttt{Magma} \cite{Magma} which has inbuilt
implementation of these algorithms and we used this, together with Theorem~\ref{thm:main}, to determine
the solutions to \eqref{eqn:main1} for $2 \le C_1 \le 10$, $1 \le C_2 \le 80$
subject to the restrictions: $C_1$ is squarefree, $\gcd(C_1,C_2)=1$, and  
$C_1C_2 \not \equiv 7 \pmod{8}$. Our results are given in Section~\ref{sec:solns}. We point out that the case $C_1=1$ and $1 \le C_2 \le 100$ is 
completely solved in \cite{BMS}, which incorporates the earlier
work of Cohn, Bennett and Skinner, and Mignotte and de Weger.

%
%
The author thanks Yann Bugeaud and Szabolcs  Tengely for useful conversations.
%

%

%

\section{Primitive prime divisors of 
Lehmer sequences} {\label{sec:BHV}}

A \textit{Lehmer pair} is a pair of algebraic integers $\alpha,\beta$,  such that $(\alpha +\beta)^2$ and $\alpha\beta$ are non--zero coprime rational integers and $\alpha/\beta$ is not a root of unity. The \textit{Lehmer sequence} 
associated to the Lehmer pair $(\alpha,\beta)$ is
\begin{equation*}
\tu_n=\tu_n(\alpha,\beta)=\begin{cases}
\frac{\alpha^n-\beta^n}{\alpha-\beta}, & \text{if $n$ is odd},\\
\frac{\alpha^n-\beta^n}{\alpha^2-\beta^2}, & \text{if $n$ is even}.
\end{cases}
\end{equation*}
A prime $p$ is called a \textit{primitive divisor} of $\tu_n$ if
it divides $\tu_n$ but does not divide 
$(\alpha^2-\beta^2)^2\cdot\tu_1\cdots\tu_{n-1}$. 
We shall make use of the following celebrated theorem~\cite{BHV}.
\begin{thm}[Bilu, Hanrot and Voutier]\label{thm:non_defective}
Let $\alpha$, $\beta$ be a Lehmer pair. Then
$\tu_n(\alpha,\beta)$ has a primitive divisor for all $n>30$,
and for all prime $n>13$.
\end{thm}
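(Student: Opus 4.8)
The plan is to follow the classical route to primitive divisor theorems (Schinzel, Stewart, Voutier), recasting the statement as a contest between an arithmetic upper bound and a transcendence lower bound for a cyclotomic quantity, and then disposing of a finite range by explicit computation. Write $\zeta_n = e^{2\pi i/n}$ and introduce the homogenised cyclotomic value
\[
\Phi_n(\alpha,\beta) = \prod_{\substack{1\le k\le n\\ \gcd(k,n)=1}} \bigl(\alpha - \zeta_n^k \beta\bigr),
\]
which is symmetric in $\alpha,\beta$ and so is controlled by the rational integers $(\alpha+\beta)^2$ and $\alpha\beta$. The first step is the \emph{cyclotomic criterion}: using the factorisation of $\alpha^n-\beta^n$ through the $\Phi_d(\alpha,\beta)$ over $d\mid n$ (with the extra even/odd bookkeeping forced by the two cases in the definition of $\tu_n$), one checks that a prime dividing $\tu_n$ but not $\Phi_n(\alpha,\beta)$ cannot be primitive, and that any non-primitive prime divisor of $\Phi_n(\alpha,\beta)$ must divide $n$ -- indeed must equal the largest prime factor $P$ of $n$. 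A $p$-adic valuation computation then bounds the exponent to which $P$ can occur. The upshot is that if $\tu_n$ has \emph{no} primitive divisor, then $\lvert \Phi_n(\alpha,\beta)\rvert \le P \le n$.

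Second, I would bound $\lvert\Phi_n(\alpha,\beta)\rvert$ from below. Factoring out $\lvert\beta\rvert$ gives
\[
\lvert \Phi_n(\alpha,\beta)\rvert = \lvert\beta\rvert^{\phi(n)} \prod_{\gcd(k,n)=1} \bigl\lvert \tfrac{\alpha}{\beta} - \zeta_n^k \bigr\rvert ,
\]
so everything hinges on how closely the ratio $\alpha/\beta$ can approach an $n$-th root of unity. In the generic (real) case $\lvert\alpha/\beta\rvert \ne 1$ and the product is bounded below by a fixed power of $\max(\lvert\alpha\rvert,\lvert\beta\rvert)$, giving rapid growth. The delicate case is when $\alpha,\beta$ are complex conjugates, where $\lvert\alpha/\beta\rvert = 1$ and a single factor $\lvert e^{2i\theta}-\zeta_n^k\rvert$ may be tiny. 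Here one forms the linear form in two logarithms $\Lambda = n\log(\alpha/\beta) - 2\pi i k$ and invokes sharp lower bounds for such forms (Laurent--Mignotte--Nesterenko), which prevent $\theta/\pi$ from being too well approximated by $k/n$. This yields $\log\lvert\Phi_n(\alpha,\beta)\rvert \ge \phi(n)\log\max(\lvert\alpha\rvert,\lvert\beta\rvert) - o(\phi(n))$, a quantity that eventually dominates $\log n$.

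Comparing the two bounds, the inequality $\lvert\Phi_n(\alpha,\beta)\rvert \le n$ can persist only for $n$ below an explicit threshold $N_0$ (roughly a few ten-thousands with the best two-log estimates). For the remaining finitely many $n$, the defectiveness condition -- that $\Phi_n(\alpha,\beta)$ lie in $\{\pm1,\pm P\}$ -- unwinds, after parametrising $(\alpha+\beta)^2$ and $\alpha\beta$, into a finite collection of Thue and Thue--Mahler equations, one for each admissible $n$ and residue class, solved by standard algorithms and Voutier's precomputed tables. Assembling the resulting (short, explicit) list of defective pairs, one reads off that none occurs for $n>30$, nor for prime $n>13$, which is exactly the assertion. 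The principal obstacle is the transcendence step: only an essentially best-possible lower bound for linear forms in two logarithms drives $N_0$ down to a range where the ensuing Thue computations become feasible, and securing the necessary uniformity in the complex-conjugate case is where the real difficulty lies.
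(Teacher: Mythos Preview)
The paper does not prove this theorem at all: it is quoted as a black box from \cite{BHV}, with no argument given beyond the citation. Your proposal is therefore not comparable to anything in the paper---you have supplied far more than the paper does. That said, your sketch is an accurate high-level outline of the original Bilu--Hanrot--Voutier proof: the cyclotomic criterion reducing absence of a primitive divisor to $\lvert\Phi_n(\alpha,\beta)\rvert \le n$, the lower bound for $\lvert\Phi_n(\alpha,\beta)\rvert$ via sharp estimates for linear forms in two logarithms in the complex-conjugate case, and the reduction of the remaining finite range to Thue equations handled computationally. For the purposes of the present paper, however, none of this is needed; the theorem is simply invoked.
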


A Lehmer pair $(\alpha,\beta)$ is called $n$-\textbf{defective}
if $\tu_n(\alpha,\beta)$ does not have a primitive divisor.
Two Lehmer pairs $(\alpha,\beta)$ and $(\alpha^\prime,\beta^\prime)$
are said to be \textbf{equivalent} if $\alpha/\alpha^\prime=\beta/\beta^\prime
\in \{\pm 1, \pm \sqrt{-1}\}$. Table 2 of \cite{BHV} gives
all equivalence classes of $n$-\textbf{defective} Lehmer pairs for 
all $6<n \le 30$ except $n \ne 8$, $10$, $12$. In particular,
\begin{itemize}
\item there are no $11$-defective Lehmer pairs; 
\item every $13$-defective
Lehmer pair is equivalent to 
$((\sqrt{a}+\sqrt{b})/2,(\sqrt{a}-\sqrt{b})/2)$ where $(a,b)=(1,-7)$; 
\item every $7$-defective Lehmer pair is equivalent to 
$((\sqrt{a}+\sqrt{b})/2,(\sqrt{a}-\sqrt{b})/2)$ where 
$(a,b)=(1,-7)$, $(1,-19)$, $(3,-5)$, $(5,-7)$, $(13,-3)$, $(14,-22)$.
\end{itemize}

\section{Preliminary Descent} \label{sec:descent}

Throughout Sections~\ref{sec:descent} and~\ref{sec:Lehmer}
we maintain the following assumptions and notation:
\begin{enumerate}[(a)]
\item $C_1 $ is a squarefree positive integer, $C_2$ is
a positive integer and $\gcd(C_1,C_2)=1$. We moreover
suppose that $C_1 C_2 \not\equiv 7 \pmod{8}$. We write
$C_1 C_2=c d^2$ where $c$, $d$ are positive integers and $c$ is squarefree.
\item $(x,y)$ satisfies \eqref{eqn:main}.
\item $p$ is an odd prime. Moreover, if $p=3$ then
we suppose additionally that 
$C_1 C_2/3$ is not a square.
\item $p$ does not divide the class number of $\Q(\sqrt{-c})$.
\end{enumerate}

\begin{lem}\label{lem:fact}
Let $(x,y)$ be a solution to 
\eqref{eqn:main}. Let $\OK$ be the ring of integers for the number field 
$K = \Q(\sqrt{-c})$.
Then there is some $\delta \in \OK$ such that
\begin{equation}\label{eqn:descent}
C_1 x + d \sqrt{-c} = \frac{\delta^p}{C_1^{(p-1)/2}}.
\end{equation}

Moreover, we have
\begin{equation}\label{eqn:subconj}
\frac{\delta^p}{C_1^{p/2}} - \frac{{\bar\delta}^p}{C_1^{p/2}} = 2d\cdot \frac{\sqrt{-c}}{\sqrt{C_1}}.
\end{equation}
\end{lem}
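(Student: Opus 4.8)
The plan is to work in the ring of integers $\OK$ of $K=\Q(\sqrt{-c})$ and factor the left-hand side of \eqref{eqn:main} after clearing the obstruction caused by $C_1$. First I would multiply \eqref{eqn:main} through by $C_1$ to obtain $(C_1x)^2 + C_1C_2 = C_1 y^p$, i.e. $(C_1x)^2 + c d^2 = C_1 y^p$, which in $\OK$ factors as $(C_1 x + d\sqrt{-c})(C_1 x - d\sqrt{-c}) = C_1 y^p$. The key point is to understand the ideal $\mathfrak{a} = (C_1 x + d\sqrt{-c})$ and show it is a $p$-th power of an ideal up to a controlled factor. I would compute the gcd of the two conjugate factors: any common prime ideal $\mathfrak{p}$ divides their sum $2C_1 x$ and their difference $2d\sqrt{-c}$, hence divides $2$, $C_1$, or $\sqrt{-c}$ (equivalently $c$). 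The coprimality condition $\gcd(C_1x^2, C_2, y^p)=1$ together with $\gcd(C_1,C_2)=1$ and $C_1 C_2 \not\equiv 7\pmod 8$ must be used to rule out the prime $2$ and the primes dividing $c$ from being primitive common divisors in a way that survives into $y$; the condition $p=3 \Rightarrow C_1C_2/3$ not a square handles the ramified prime above $3$. The upshot should be that, as ideals, $(C_1 x + d\sqrt{-c}) = \mathfrak{c}\cdot \mathfrak{b}^p$ where $\mathfrak{c}$ is a fixed ideal dividing $(C_1)$ (in fact built from the primes of $C_1$, which are inert or split but not ramified since $\gcd(C_1,C_2)=1$ and... one must check $C_1$ is coprime to $c$'s ramification — here the squarefreeness of $C_1$ and $c \mid C_1C_2$ is used).

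Next I would pass from the ideal equation to an element equation. Since $p$ does not divide the class number of $K$ (assumption (d)), the ideal $\mathfrak{b}$ is principal precisely because $\mathfrak{b}^p$ is principal (being $\mathfrak{c}^{-1}(C_1x+d\sqrt{-c})$, and one checks $\mathfrak{c}^{-1}$ times the principal ideal is an honest ideal whose $p$-th... rather: $\mathfrak{b}^p$ principal and $p \nmid h_K$ forces $[\mathfrak{b}]$ trivial in the class group). Write $\mathfrak{b} = (\beta)$. Then $C_1 x + d\sqrt{-c} = u\, \gamma\, \beta^p$ for a unit $u$ and a generator $\gamma$ of $\mathfrak{c}$; since $c$ is a positive squarefree integer with $-c < 0$, the unit group of $\OK$ is $\{\pm 1\}$ except for $c=1,3$, and in all cases $\pm 1$ (and the extra roots of unity for $c=1,3$) are $p$-th powers for odd $p$ — this absorbs $u$ into $\beta^p$. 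The factor $\gamma$ coming from $\mathfrak{c} \mid (C_1)$ is what produces the denominator $C_1^{(p-1)/2}$: tracking exponents, $\mathfrak{c}$ should turn out to have norm $C_1$ (it collects one prime above each prime divisor of $C_1$, matching the single factor of $C_1$ on the right of $(C_1x)^2 + cd^2 = C_1y^p$), and combining $\gamma\beta^p$ and rescaling gives exactly $C_1 x + d\sqrt{-c} = \delta^p / C_1^{(p-1)/2}$ for a suitable $\delta\in\OK$. This is the content of \eqref{eqn:descent}.

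Finally, \eqref{eqn:subconj} is a short formal consequence: multiply \eqref{eqn:descent} by $\sqrt{C_1}$ to get $C_1\sqrt{C_1}\, x + d\sqrt{C_1}\sqrt{-c} = \delta^p/C_1^{(p-1)/2 - 1/2} = \delta^p/C_1^{(p-2)/2}$, hmm — more cleanly, from \eqref{eqn:descent} we have $C_1 x + d\sqrt{-c} = \delta^p C_1^{-(p-1)/2}$, and conjugating gives $C_1 x - d\sqrt{-c} = \bar\delta^p C_1^{-(p-1)/2}$; subtracting yields $2 d\sqrt{-c} = (\delta^p - \bar\delta^p) C_1^{-(p-1)/2}$, and dividing both sides by $\sqrt{C_1}$ gives
\[
\frac{2d\sqrt{-c}}{\sqrt{C_1}} = \frac{\delta^p - \bar\delta^p}{C_1^{(p-1)/2}\sqrt{C_1}} = \frac{\delta^p}{C_1^{p/2}} - \frac{\bar\delta^p}{C_1^{p/2}},
\]
which is \eqref{eqn:subconj}. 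The main obstacle I anticipate is the bookkeeping in the first paragraph: pinning down the ideal $\mathfrak{c}$ exactly (rather than merely up to bounded primes), and in particular showing that the primes dividing $2$ and $c$ do not contribute — this is precisely where the hypotheses $\gcd(C_1,C_2)=1$, $C_1C_2\not\equiv 7\pmod 8$, the coprimality in \eqref{eqn:main}, and the $p=3$ caveat all have to be deployed carefully, since a $2$ appearing with the wrong parity or a ramified prime above $c$ sneaking in would break the clean denominator $C_1^{(p-1)/2}$.
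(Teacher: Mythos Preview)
Your overall shape is right --- factor in $\OK$, identify the ideal decomposition, use $p\nmid h_K$ to pass to elements, absorb units, then subtract conjugates --- and your derivation of \eqref{eqn:subconj} from \eqref{eqn:descent} is exactly what the paper does. But there is a genuine error in the ideal bookkeeping that, if left as written, would not yield the clean denominator $C_1^{(p-1)/2}$.

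You assert that the primes dividing $C_1$ are ``inert or split but not ramified'' in $K=\Q(\sqrt{-c})$, and that one should check $C_1$ is coprime to the ramification of $c$. The opposite is true: since $C_1$ is squarefree, $\gcd(C_1,C_2)=1$, and $C_1C_2=cd^2$ with $c$ squarefree, every prime $p_i\mid C_1$ divides $C_1C_2$ to an odd power, hence $p_i\mid c$. Thus $C_1\mid c$, and \emph{every} prime of $C_1$ ramifies in $K$. This is not a nuisance but the engine of the proof. Writing $\fa=\fp_1\cdots\fp_r$ for the product of the ramified primes above $p_1,\dots,p_r$, one has $\fa^2=C_1\OK$; since $p$ is odd, $\fa^{1-p}=(C_1)^{(1-p)/2}$ is a \emph{principal} fractional ideal. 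The paper then rewrites
\[
(C_1x+d\sqrt{-c})\OK=\fa\cdot\fy^p=\fa^{1-p}\cdot(\fa\fy)^p=(C_1^{(1-p)/2})\cdot(\fa\fy)^p,
\]
so that $(\fa\fy)^p$ is visibly principal and $p\nmid h_K$ forces $\fa\fy=(\delta)$. This is precisely where the exponent $(p-1)/2$ on $C_1$ comes from; without ramification you cannot convert the leftover ideal $\mathfrak c$ into a rational power of $C_1$, and your attempt to make $\mathfrak b$ itself principal runs into the difficulty that $[\mathfrak b]^p=[\mathfrak c]^{-1}$ need not be trivial.

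One smaller correction: the hypothesis ``$p=3\Rightarrow C_1C_2/3$ is not a square'' has nothing to do with a ramified prime above $3$. Its role is to exclude $K=\Q(\sqrt{-3})$, whose unit group has order $6$ and is therefore not $3$-divisible; in every other case the unit group of $\OK$ has order $2$ or $4$, so any unit is a $p$-th power and can be absorbed into $\delta$. (The hypothesis $C_1C_2\not\equiv 7\pmod 8$ is used only to force $y$ odd, which keeps the prime $2$ out of the factorization.)
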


\begin{proof}
Let $K=\Q(\sqrt{-c})$ and $\OK$ its ring of integers.
Let $h_K$ be the class number of $K$ and we assume that $p\nmid h_K$. 
As $C_1 C_2 \not\equiv 7 \pmod{8}$ we have that $y$ is odd.

As $C_1$, $c$ are both squarefree, $\gcd(C_1,C_2)=1$
 and
$C_1 C_2=c d^2$ it follows that $C_1 \mid c$.
Let $C_1 = p_1\cdots p_r$ 
where we note that the primes $p_1,\ldots p_r$ ramify in $K$.

We factorise equation~\eqref{eqn:main} in $\OK$ as follows:

\[
\left(C_1 x + d\sqrt{-c}\right)\left(C_1 x-d\sqrt{-c}\right) = C_1\cdot y^p = p_1\cdots p_r\cdot y^p.
\]

Let us write $\fp_i$ for the prime ideal above $p_i$ where $1\leq i \leq r$. 
Let $\fa=\fp_1 \cdots\fp_r$ and we obtain:
\begin{align*}
(C_1 x+ d\sqrt{-c})\OK  
 &=  \fp_1\cdots \fp_r\cdot \fy ^p \\
 &= \fa^{1-p}\cdot(\fa\fy)^p \\
 &= (C_1^{(1-p)/2})\cdot(\fa\fy)^p
\end{align*}
where $\fa \fy$ is a principal ideal of $\OK$. Indeed,  $[\fa\fy]^p = 1$ in the
class group. Therefore the class $[\fa\fy]$ has order dividing $p$. 
By assumption $p \nmid h_K$. Thus $\fa \fy$ is principal.

Therefore, we write $\fa\fy = \delta\OK$. The unit group of $\OK$
has order $2$, $4$ or $6$, and is therefore $p$-divisible,
unless $p=3$. However, for $p=3$ we have assumed
that $C_1 C_2/3$ is a non-square and therefore $K \ne \Q(\sqrt{-3})$,
and so the order of the unit group is $2$ or $4$. Thus
in all cases the unit group is $p$-divisible. Thus adjusting
$\delta$ by an appropriate unit we obtain \eqref{eqn:descent}.
Subtracting the conjugate from \eqref{eqn:descent}, we get
\begin{equation*}
\frac{\delta^p}{C_1^{(p-1)/2}} - \frac{{\bar\delta}^p}{C_1^{(p-1)/2}} = 2d\sqrt{-c},
\end{equation*}
which is equivalent to \eqref{eqn:subconj}.
This completes the proof of the lemma.
\end{proof}

\noindent \textbf{Remark.}
If $C_1 C_2 \equiv 7 \pmod{8}$ then it is possible
for $y$ to be even. In that case it is no longer
true that we can express $(C_1 x+ d \sqrt{-c}) \OK$
in the form $\fa \fy^p$ where $\fa^2=C_1 \OK$.

\section{Satisfying the Lehmer condition} \label{sec:Lehmer}

Let $K = \Q(\sqrt{-c})$ as before, and consider the extension, $L/K$, where $L=\Q(\sqrt{-c}, \sqrt{C_1})$. Observe that $L/K$ is trivial if $C_1=1$,
and is quadratic otherwise.
We write $\OL$ for its ring of integers and set
$
\alpha = \delta/\sqrt{C_1} ,\;  \beta= \bar{\delta}/\sqrt{C_1}.
$
Thus equation~\eqref{eqn:subconj} becomes
\begin{equation}{\label{eqn:alphabeta}}
\alpha^p - \beta^p = 
2d\cdot \frac{\sqrt{-c}}{\sqrt{C_1}}. 
\end{equation}

For the remainder of this section, 
in the case $-c \not\equiv 1 \pmod{4}$ we let
\begin{equation}\label{eqn:delta1}
\delta = r + s\sqrt{-c}, \quad \bar\delta = r - s\sqrt{-c}, 
\end{equation}
where $r$, $s$ are integers. 
In the case $-c \equiv 1 \pmod{4}$ we let
\begin{equation}\label{eqn:delta2}
\delta = \frac{r + s\sqrt{-c}}{2}, \quad \bar\delta = \frac{r - s\sqrt{-c}}{2},
\end{equation}
where $r$ and $s$ are either both odd or both even.

\begin{lem}\label{lem:algint}
$\alpha$ and $\beta$ are algebraic integers. Moreover,
\[
\alpha \beta=y, \qquad \sqrt{C_1} x+ \sqrt{-C_2} = \alpha^p,
\qquad \sqrt{C_1} x- \sqrt{-C_2}=\beta^p.
\]
\end{lem}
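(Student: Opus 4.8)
The plan is to work from the explicit descriptions \eqref{eqn:delta1} and \eqref{eqn:delta2} of $\delta$ together with the factorisation \eqref{eqn:descent} from Lemma \ref{lem:fact}, and to verify each claim by a direct norm/conjugation computation in $L = \Q(\sqrt{-c},\sqrt{C_1})$. First I would establish that $\alpha = \delta/\sqrt{C_1}$ and $\beta = \bar\delta/\sqrt{C_1}$ are algebraic integers. Recall from the proof of Lemma \ref{lem:fact} that $\delta\mathcal O_K = \mathfrak a\mathfrak y$, where $\mathfrak a = \mathfrak p_1\cdots\mathfrak p_r$ and $\mathfrak p_i^2 = p_i\mathcal O_K$ with $C_1 = p_1\cdots p_r$. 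In $\mathcal O_L$ each ramified prime $p_i$ of $K$ satisfies $p_i\mathcal O_L = (\sqrt{p_i}\,\mathcal O_L)^2$ in the appropriate sense, so $\mathfrak a\mathcal O_L = (\sqrt{C_1}\,\mathcal O_L)\cdot(\text{ideal})$; more cleanly, $\alpha\bar\alpha$-type computations show $N_{L/\Q}$ of $\alpha$ is an integer and the minimal polynomial of $\alpha$ over $\Q$ has integer coefficients. The key input is that $C_1 \mid \delta\bar\delta = N_{K/\Q}(\delta)$ (since $\delta\bar\delta\,\mathcal O_K = \mathfrak a^2\mathfrak y^2 = C_1\,\mathfrak y^2$) and that $C_1$ is squarefree, which forces $\delta/\sqrt{C_1}$ to be integral over $\Z$.

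Next I would compute $\alpha\beta$ and the two expressions for $\alpha^p$, $\beta^p$. We have $\alpha\beta = \delta\bar\delta/C_1 = N_{K/\Q}(\delta)/C_1$; since $\delta\bar\delta\,\mathcal O_K = C_1\mathfrak y^2$ and $N_{K/\Q}(\mathfrak y) = y$ (as $\mathfrak a\mathfrak y$ has norm $C_1 y$ and $N(\mathfrak a) = C_1$), we get $\delta\bar\delta = C_1 y$ up to sign; positivity of $y$ and of the relevant quantities pins down $\alpha\beta = y$. For the power formulas, raise \eqref{eqn:descent} to get $\delta^p/C_1^{(p-1)/2} = C_1 x + d\sqrt{-c}$, hence
\[
\alpha^p = \frac{\delta^p}{C_1^{p/2}} = \frac{C_1 x + d\sqrt{-c}}{\sqrt{C_1}} = \sqrt{C_1}\,x + \frac{d\sqrt{-c}}{\sqrt{C_1}}.
\]
Now $\dfrac{d\sqrt{-c}}{\sqrt{C_1}} = \dfrac{d\sqrt{-c}\sqrt{C_1}}{C_1} = \dfrac{\sqrt{-cC_1}\cdot d}{C_1}$, and since $C_1 C_2 = cd^2$ and $C_1\mid c$ we have $-cC_1 d^2/C_1^2 = -c d^2/C_1 = -C_2$, so this term equals $\sqrt{-C_2}$ (with the sign fixed consistently). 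Thus $\alpha^p = \sqrt{C_1}\,x + \sqrt{-C_2}$, and applying the nontrivial automorphism of $L/K$ (which sends $\sqrt{C_1}\mapsto -\sqrt{C_1}$, equivalently conjugating $\delta\mapsto\bar\delta$ appropriately) yields $\beta^p = \sqrt{C_1}\,x - \sqrt{-C_2}$.

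The main obstacle I anticipate is bookkeeping the signs and the choice of square roots consistently: $\sqrt{C_1}$, $\sqrt{-c}$, $\sqrt{-C_2}$ live in $L$ and one must fix embeddings so that $\sqrt{-C_2} = d\sqrt{-c}/\sqrt{C_1}$ holds as an honest equality of elements of $L$ rather than just up to $\pm 1$; this is where the hypothesis that $(x,y)$ is a \emph{positive} solution and the normalisation of $\delta$ in \eqref{eqn:descent} get used. A secondary technical point is the integrality claim when $-c\equiv 1\pmod 4$ and $\delta$ has the half-integer form \eqref{eqn:delta2}: one must check $\alpha$ is still integral, which follows since $2\mid C_1$ is impossible ($C_1 C_2 \not\equiv 7 \bmod 8$ and the parity analysis already done) only if handled via the ideal-theoretic argument above rather than naive denominators. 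Once the embeddings are fixed, all three identities are immediate consequences of \eqref{eqn:descent}, \eqref{eqn:subconj} and the relation $cd^2 = C_1 C_2$.
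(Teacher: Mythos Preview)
Your plan is essentially the paper's own argument, with the same two ingredients: the ideal factorisation $\delta\mathcal O_K=\mathfrak a\mathfrak y$ for integrality, and dividing \eqref{eqn:descent} by $\sqrt{C_1}$ for the formulae. A few points where the paper is tighter or where your write-up slips:

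\emph{Integrality.} Since $\mathfrak a^2=C_1\mathcal O_K$, in $\mathcal O_L$ one has $(\sqrt{C_1}\,\mathcal O_L)^2=C_1\mathcal O_L=\mathfrak a^2\mathcal O_L$, hence $\sqrt{C_1}\,\mathcal O_L=\mathfrak a\mathcal O_L$ outright (unique factorisation of ideals), not merely up to an extra ideal factor. Then $\sqrt{C_1}\,\mathcal O_L\mid\delta\mathcal O_L$ is immediate. Your fallback via ``$C_1\mid\delta\bar\delta$ and $C_1$ squarefree'' is not by itself enough to force $\delta/\sqrt{C_1}$ integral; you really do need the ideal argument.

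\emph{The product $\alpha\beta=y$.} The paper obtains this after the power formulae: multiply $\alpha^p=\sqrt{C_1}x+\sqrt{-C_2}$ by its conjugate to get $(\alpha\beta)^p=C_1x^2+C_2=y^p$, then take positive $p$-th roots. Your norm route also works, but note $\delta\bar\delta\,\mathcal O_K=\mathfrak a^2\,\mathfrak y\bar{\mathfrak y}=C_1\,\mathfrak y\bar{\mathfrak y}$, not $C_1\mathfrak y^2$; and to justify $N(\mathfrak y)=y$ you must take the norm of \eqref{eqn:descent}, which is precisely the paper's multiplication in disguise.

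\emph{A factual slip.} Your parenthetical that $2\mid C_1$ is impossible is false: $C_1=2$, $6$, $10$ all occur in the paper (and $C_1C_2\not\equiv 7\pmod 8$ does not exclude even $C_1$). This does not affect the proof, since integrality in the half-integer case \eqref{eqn:delta2} is handled by the ideal argument regardless of the parity of $C_1$.

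\emph{Signs.} The paper simply \emph{defines} $\sqrt{-C_2}:=d\sqrt{-c}/\sqrt{C_1}$ in $L$, so no separate sign bookkeeping is needed; your concern here is legitimate but dissolves once this convention is fixed.
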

\begin{proof}
By the proof of Lemma~\ref{lem:fact}, 
$\fa^2= C_1 \OK$ and so
$\sqrt{C_1} \OL=\fa \OL$ which divides $\fa \fy \OL=\delta \OL$.
Hence $\alpha=\delta/\sqrt{C_1}$ is an algebraic integer.

Dividing \eqref{eqn:descent} by $\sqrt{C_1}$ gives
$\sqrt{C_1} x+ \sqrt{-C_2} = \alpha^p$ and applying
complex conjugation gives
$\sqrt{C_1} x- \sqrt{-C_2} = \beta^p$.
Multiplying the two equations gives $y^p=(\alpha \beta)^p$.
But as $\alpha$, $\beta$ are complex conjugates, $y$, $\alpha \beta$
are both positive, so $y=\alpha \beta$ as required.
\end{proof}

\begin{lem}\label{lem:alphabetasquare}
$(\alpha+\beta)^2$ is a non--zero rational integer.
\end{lem}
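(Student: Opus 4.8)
The plan is to use that $\alpha$ and $\beta$ are algebraic integers, as established in Lemma~\ref{lem:algint}: then $\alpha+\beta$ lies in $\OL$, so $(\alpha+\beta)^2$ is an algebraic integer, and it only remains to check that it is both rational and non-zero.

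For rationality I would compute directly that $(\alpha+\beta)^2 = (\delta+\bar\delta)^2/C_1$. Since $\delta \in \OK$ and $\bar\delta$ is its conjugate over $\Q$, the quantity $T := \delta+\bar\delta = \mathrm{Tr}_{K/\Q}(\delta)$ is a rational integer (with the explicit shapes \eqref{eqn:delta1} and \eqref{eqn:delta2} one gets $T = 2r$ or $T = r$ respectively). Hence $(\alpha+\beta)^2 = T^2/C_1 \in \Q$. Being also an algebraic integer, and $\Z$ being integrally closed in $\Q$, it follows that $(\alpha+\beta)^2 \in \Z$; equivalently $C_1 \mid T^2$, and as $C_1$ is squarefree, $C_1 \mid T$.

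It remains to rule out $(\alpha+\beta)^2 = 0$, i.e. $\alpha = -\beta$. If this held, then $\alpha\beta = y > 0$ (Lemma~\ref{lem:algint}) would give $\alpha^2 = -y < 0$, so $\alpha$ would be a non-zero purely imaginary number, and since $p$ is odd so would $\alpha^p$. But Lemma~\ref{lem:algint} gives $\alpha^p = \sqrt{C_1}\,x + \sqrt{-C_2}$, whose real part $\sqrt{C_1}\,x$ is strictly positive because $x \in \Z^{+}$ --- a contradiction. Thus $(\alpha+\beta)^2$ is a non-zero rational integer. The argument is routine; the one place that needs attention is this last step, where one must invoke the positivity of $x$ (equivalently, that $\alpha$ is not purely imaginary) rather than merely $\alpha \ne \beta$, which is all that \eqref{eqn:alphabeta} gives directly.
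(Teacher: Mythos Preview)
Your proof is correct and follows essentially the same approach as the paper's: show $(\alpha+\beta)^2$ is simultaneously an algebraic integer and rational (the paper computes it explicitly as $4r^2/C_1$ or $r^2/C_1$ via \eqref{eqn:delta1}--\eqref{eqn:delta2}, whereas you observe $\delta+\bar\delta=\mathrm{Tr}_{K/\Q}(\delta)\in\Z$), and then rule out vanishing by noting it would force $x=0$. The only cosmetic difference is that for the non-vanishing step the paper argues with $\delta$ and \eqref{eqn:descent}, while you argue with $\alpha$ and the identity $\alpha^p=\sqrt{C_1}\,x+\sqrt{-C_2}$ from Lemma~\ref{lem:algint}; these are the same argument up to dividing by $\sqrt{C_1}$.
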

\begin{proof}
By Lemma~\ref{lem:algint},
$(\alpha+\beta)^2$ is an algebraic integer. However, 
\[
(\alpha+\beta)^2 = \left(\frac{\delta + \bar\delta}{\sqrt{C_1}}\right)^2 = 
\begin{cases}
4r^2/C_1 &\text{ if } -c \not\equiv 1 \pmod{4}\\
r^2/C_1  &\text{ if } -c \equiv 1 \pmod{4},
\end{cases}
\]
thus $(\alpha+\beta)^2$ is a rational number as well as being
an algebraic integer. Thus it is a rational integer.

Next, we suppose that $(\alpha + \beta)^2 = 0$. 
Then $\delta$ is purely imaginary, and \eqref{eqn:descent}
implies that $x=0$. This contradicts our assumption
that $x$ is positive.
\end{proof}

The following is immediate from Lemma~\ref{lem:algint}.
\begin{lem}\label{lem:alphabeta}
$\alpha\beta$ is a non-zero rational integer.
\end{lem}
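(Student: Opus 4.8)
The plan is to read everything off from Lemma~\ref{lem:algint}, since that result has already done all the work. That lemma gives $\alpha\beta = y$, so the only thing left to verify is that $y$ is a \emph{non-zero} rational integer.

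First I would recall that, under the standing assumptions of this section, $(x,y)$ is a solution of \eqref{eqn:main} with $x,y \in \Z^{+}$; in particular $y$ is already a positive rational integer. If one prefers not to invoke the sign convention, the same conclusion follows directly from the equation: $y^p = C_1 x^2 + C_2 \ge C_1 + C_2 > 0$ (using $x \ge 1$ and $C_1, C_2 \ge 1$), so $y \ge 1$. Either way $y \in \Z$ and $y \ne 0$.

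Combining this with Lemma~\ref{lem:algint} we get $\alpha\beta = y$, a non-zero rational integer, which is exactly the assertion. There is genuinely no obstacle to overcome here: the only substantive point, namely that the a priori algebraic number $\alpha\beta$ is in fact a (positive) rational integer, was already established inside the proof of Lemma~\ref{lem:algint}, where the identity $y^p = (\alpha\beta)^p$ together with the positivity of both $y$ and $\alpha\beta$ (the latter because $\beta = \bar\alpha$, so $\alpha\beta = \lvert \alpha \rvert^2 \ge 0$) forced $\alpha\beta = y$.
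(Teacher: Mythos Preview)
Your proposal is correct and matches the paper's own argument: the paper simply states that the lemma is immediate from Lemma~\ref{lem:algint}, which gives $\alpha\beta=y$, and $y$ is a positive integer by hypothesis. Your additional remarks about positivity just make explicit what the paper leaves implicit.
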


\begin{lem}\label{lem:coprime}
$(\alpha+\beta)^2$ and $\alpha\beta$ are coprime.
Moreover $\alpha/\beta$ is not a unit.
\end{lem}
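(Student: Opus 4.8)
The plan is to prove the two assertions separately, both by extracting arithmetic information from the relations $\alpha\beta = y$ and $\sqrt{C_1}\,x + \sqrt{-C_2} = \alpha^p$ of Lemma~\ref{lem:algint}, together with the formula for $(\alpha+\beta)^2$ obtained in the proof of Lemma~\ref{lem:alphabetasquare}. For the coprimality of $(\alpha+\beta)^2$ and $\alpha\beta$: suppose a rational prime $\ell$ divides both. From $\alpha\beta = y$ we get $\ell \mid y$, hence $\ell \mid y^p = C_1 x^2 + C_2$. I would then use the coprimality hypothesis $\gcd(C_1 x^2, C_2, y^p) = 1$ from \eqref{eqn:main}: since $\ell \mid y^p$ and $\ell \mid C_1 x^2 + C_2$, if $\ell$ divided $C_2$ it would divide $C_1 x^2$, contradicting the gcd condition; so $\ell \nmid C_2$, and therefore $\ell \nmid C_1$ (as $C_1 \mid c \mid C_1 C_2$ and $\gcd(C_1,C_2)=1$ forces any prime factor of $C_1$ to be handled via $C_1 C_2 = cd^2$). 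Now bring in $\ell \mid (\alpha+\beta)^2$. Using $4r^2/C_1$ or $r^2/C_1$ for $(\alpha+\beta)^2$, and the conjugate relation $2\sqrt{C_1}\,x = \alpha^p + \beta^p$, I would show $\ell \mid x$: indeed $\alpha^p + \beta^p$ is a polynomial in $(\alpha+\beta)^2$ and $\alpha\beta$ with integer coefficients (each monomial containing at least one factor of $(\alpha+\beta)^2$ since $p$ is odd), so $\ell \mid \alpha^p+\beta^p = 2\sqrt{C_1}\,x$, and after squaring, $\ell \mid 4 C_1 x^2$; combined with $\ell \mid C_1 x^2 + C_2$ this gives $\ell \mid C_2$ (if $\ell$ odd) — contradiction; the prime $\ell = 2$ is excluded because $y$ is odd (shown in the proof of Lemma~\ref{lem:fact}), so $2 \nmid \alpha\beta = y$.

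For the second assertion, that $\alpha/\beta$ is not a unit: I would argue that $\alpha/\beta$ is a unit if and only if $\alpha$ and $\beta$ generate the same ideal in $\OL$, equivalently $(\alpha) = (\beta)$. Taking norms or comparing with $\alpha\beta = y$, a unit quotient would force $\alpha^2$ (up to a unit) to equal $y$, so $\alpha^p$ and $\beta^p$ would generate the same ideal; but $\alpha^p - \beta^p = 2d\sqrt{-c}/\sqrt{C_1}$ by \eqref{eqn:alphabeta} and $\alpha^p + \beta^p = 2\sqrt{C_1}\,x$, so $(\alpha^p)$ divides both $2d\sqrt{-c}/\sqrt{C_1}$ and $2\sqrt{C_1}\,x$, hence $(\alpha^p)$ divides their combination, forcing $y = |\alpha\beta|$ to divide a quantity controlled by $2$, $d$, $c$ and $x$. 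More cleanly: if $\alpha/\beta$ is a unit then so is $\alpha^p/\beta^p$, say $\alpha^p = \eta \beta^p$ with $\eta \in \OL^\times$; substituting into $\alpha^p - \beta^p = 2d\sqrt{-c}/\sqrt{C_1}$ and $\alpha^p+\beta^p = 2\sqrt{C_1}\,x$ and eliminating, one gets that $\beta^p$ (hence $y$, via $|\beta^p| = |\alpha^p|$ and $|\alpha\beta|=y$) divides $\gcd$-type expressions that are bounded, which combined with $|y| > 1$ and the explicit shape of the unit group forces a contradiction for the primes $p$ under consideration. I expect the cleanest route is to observe that the previous part already shows $(\alpha+\beta)^2$ and $\alpha\beta$ coprime, and then invoke that a Lehmer pair with $\alpha/\beta$ a root of unity would make $\alpha,\beta$ satisfy $\alpha = \zeta\beta$ with $\zeta \in \{\pm1,\pm\sqrt{-1}, \text{6th roots}\}$, each of which is incompatible with $\alpha^p - \beta^p = 2d\sqrt{-c}/\sqrt{C_1} \ne 0$ together with $\alpha\beta = y$ positive and $x$ positive.

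The main obstacle will be the second assertion: ruling out $\alpha/\beta$ being a \emph{root of unity} (not merely a unit), since $\OL$ may contain extra roots of unity when $-c \equiv 1 \pmod 4$ or when $C_1 > 1$ introduces $\sqrt{-1}$ or $\sqrt{-3}$ into $L$. The delicate point is that $\alpha = \zeta \beta$ for a nontrivial root of unity $\zeta$ would give $\alpha^p - \beta^p = (\zeta^p - 1)\beta^p$, and one must check that $(\zeta^p - 1)\beta^p$ cannot equal $2d\sqrt{-c}/\sqrt{C_1}$; here I would use that $\alpha\beta = y$ is a positive rational integer with $y > 1$, so $|\beta|^2 = y$ is reasonably large, while $2d\sqrt{-c}/\sqrt{C_1}$ has a fixed, small-ish absolute value relative to $y^{p/2}$ when $p \ge 3$ — a size comparison that should close the argument, possibly after separating the finitely many small cases already excluded in the statement of Theorem~\ref{thm:main}. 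The first assertion, by contrast, is a routine descent-of-common-divisors argument once the gcd hypothesis and the oddness of $y$ are invoked.
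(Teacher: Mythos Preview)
Your argument for coprimality is essentially correct, though phrased a bit loosely: $\alpha^p+\beta^p$ is not itself a polynomial in $(\alpha+\beta)^2$ and $\alpha\beta$ (it is odd in $\alpha+\beta$), but $(\alpha^p+\beta^p)/(\alpha+\beta)$ is such a polynomial, so after squaring you do obtain $\ell \mid 4C_1x^2$ and can finish. The paper takes a shorter route by working with a prime ideal $\fq$ of $\OL$ rather than a rational prime: if $\fq$ divides both $(\alpha+\beta)^2$ and $\alpha\beta$ then $\fq\mid\alpha$ and $\fq\mid\beta$, whence $\fq\mid y$ and $\fq\mid 2\sqrt{C_1}\,x$ directly from Lemma~\ref{lem:algint}, and one concludes via the gcd hypothesis exactly as you do.

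The second assertion, however, has a real gap. None of your three sketches is carried to completion, and the last one rests on a false premise: when $C_1>1$ the field $L=\Q(\sqrt{-c},\sqrt{C_1})$ is totally imaginary of degree~$4$, hence has unit rank~$1$, so a unit of $\OL$ need not be a root of unity at all. Even in the root-of-unity case your size comparison $2\sqrt{C_2}=|\zeta^p-1|\,y^{p/2}$ only bounds $y$ rather than giving a contradiction, and the divisibility sketch (``$\beta^p$ divides gcd-type expressions'') is left unfinished.

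The paper's argument is much simpler and recycles the first part. Suppose $\alpha/\beta$ is a unit, so $\alpha\mid\beta$ and $\beta\mid\alpha$ in $\OL$. If $\alpha$ were not itself a unit, some prime $\fq$ of $\OL$ would divide $\alpha$, hence also $\beta$; then $\fq\mid\alpha\beta=y$ and $\fq\mid\alpha^p+\beta^p=2\sqrt{C_1}\,x$, which is exactly the contradiction obtained in the coprimality step. Hence $\alpha$ (and $\beta$) must be units, so $y=\alpha\beta$ is a unit in $\Z$, i.e.\ $y=\pm1$, impossible since $C_1x^2+C_2=y^p$ with $C_1,C_2,x$ positive.
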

\begin{proof}
Suppose that $(\alpha + \beta)^2$ and $\alpha\beta$ are not coprime. Then there
exists a prime $\fq$ of $\OL$ which divides both. Thus, 
$\fq \mid \alpha, \beta$. By Lemma~\ref{lem:algint},
$\fq \mid y$ and $\fq \mid (2 \sqrt{C_1} x)$. As we saw previously,
$y$ must be odd. Hence $\fq \mid y$ and $\fq \mid C_1 x^2$,
contradicting our coprimality assumption.

Finally suppose $\alpha/\beta$ is a unit. In particular $\alpha \mid \beta$
and $\beta \mid \alpha$. We claim that $\alpha$ is a unit.
Suppose otherwise, and let $\fq \mid \alpha$ be a prime of $\OL$.
Then $\fq \mid \beta$ and we obtain a contradiction as above. Hence
$\alpha$ must be a unit and so $\beta$ is a unit. Therefore
$y=\alpha \beta$ is a unit in $\Z$. Thus $y = \pm 1$. This contradicts
$C_1 x^2+C_2=y^p$ and the positivity assumption for the solution.
\end{proof}

Lemmata~\ref{lem:algint}, \ref{lem:alphabetasquare},  \ref{lem:alphabeta},
\ref{lem:coprime} provide a proof to the following:
\begin{prop}{\label{prop:Lehmercond}}
Let $\alpha,\beta$ be as above. Then $\alpha$ and $\beta$ are algebraic integers. Moreover, $(\alpha+\beta)^2$ and $\alpha\beta$ are non--zero, coprime, rational integers and $\alpha/\beta$ is not a unit.
\end{prop}

\section{Proof of Theorem~\ref{thm:main}}
In this section we prove Theorem~\ref{thm:main}.
We suppose $p>5$ and $p \nmid h_K$. We would like
to show that $(p,y)=(7,3)$, $(7,5)$, $(7,9)$ or there is some prime
$q \mid d$, $q \nmid 2c$ such that $p \mid B_q$ where
\[
B_q=\begin{cases}
q-1 & \text{if $\left(\frac{-c}{q}\right)=1$}\\
q+1 & \text{if $\left(\frac{-c}{q}\right)=-1$}.
\end{cases}
\]
Let $(\alpha,\beta)$ be as above.
Proposition~\ref{prop:Lehmercond} tells us that $(\alpha,\beta)$ is 
indeed a Lehmer pair. We denote by $\tu_k$ the associated Lehmer sequence. 
From \eqref{eqn:delta1}, \eqref{eqn:delta2} we have
\begin{equation}\label{eqn:diff}
\alpha-\beta=
\begin{cases}
\frac{2s\sqrt{-c}}{\sqrt{C_1}} & \text{if $-c \not\equiv 1 \pmod{4}$}\\
\frac{s\sqrt{-c}}{\sqrt{C_1}} & \text{if $-c \equiv 1 \pmod{4}$}.
\end{cases}
\end{equation}
Combining with \eqref{eqn:alphabeta} gives
\begin{equation}\label{eqn:dd}
\tu_p=\frac{\alpha^p-\beta^p}{\alpha-\beta}=
\begin{cases}
\frac{d}{s} & \text{if $-c \not\equiv 1 \pmod{4}$}\\
\frac{2d}{s} & \text{if $-c \equiv 1 \pmod{4}$}.
\end{cases}
\end{equation}
We suppose first that $(\alpha,\beta)$ is not $p$-defective.
Thus there is a prime
$q \mid \tu_p$ such that $q \nmid (\alpha^2-\beta^2)^2$
and $q \nmid \tu_1 \tu_2 \cdots \tu_{p-1}$.
We claim that $q \ne 2$. Suppose $q=2$. 
Let $\fq$ be a prime of $\OL$ dividing $q$.
Then
\[
\alpha^p \equiv \beta^p \pmod{\fq}, \qquad \alpha \not\equiv \beta \pmod{\fq}.
\]
Hence $\alpha/\beta$ has order $p$ in $(\OL/\fq)^*$. This 
group has order $\norm(\fq) -1$. As $L$ has degree $4$, 
$\norm(\fq)=2$ or $4$ or $16$. Thus $p=3$ or $5$ which 
contradicts $p>5$. Therefore $q \ne 2$.

Next we claim that $q \nmid C_1$. Suppose $q \mid C_1$.
Let $\fq$ be a prime of $\OL$ dividing $q$.
Then $\alpha^p \equiv \beta^p \pmod{\fq}$ and $\sqrt{C_1} \equiv 0
\pmod{\fq}$. By Lemma~\ref{lem:algint}, 
$\fq \mid 2 \sqrt{-C_2}$. Hence $q \mid C_1$ and $q \mid (2 C_2)$.
But $C_1$, $C_2$ are coprime and $q \ne 2$ giving a contradiction.
Thus $q \nmid C_1$.

From \eqref{eqn:diff}, the fact that $q \nmid C_1$ and
$q \nmid (\alpha^2-\beta^2)^2$ we deduce that $q \nmid c$
as required.

 Let $\fq$ be a prime of $K$
above $q$. Then $\delta/\overline{\delta} \not\equiv 1 \pmod{\fq}$
and $(\delta/\overline{\delta})^p \equiv 1 \pmod{\fq}$.
If $(-c/q)=1$ then $\F_\fq=\F_q$ and so $p \mid (q-1)$.
If $(-c/q)=-1$ then $\F_\fq=\F_{q^2}$. However, 
$\delta/\overline{\delta} \pmod{\fq}$ belongs to the kernel
of the norm map $\F_{q^2}^* \rightarrow \F_q^*$ which
has order $q+1$. Thus in this case, $p \mid (q+1)$.
Hence $p \mid B_q$.

To complete the proof we need to consider the case where 
$(\alpha,\beta)$ is $p$-defective. By Theorem~\ref{thm:non_defective}
and the discussion following it, we know that $p=7$ or $13$.
Moreover $(\alpha,\beta)$ is equivalent to $(\alpha^\prime,\beta^\prime)
=((\sqrt{a}+\sqrt{b})/2, (\sqrt{a}-\sqrt{b})/2)$ where the possibilities
for $(a,b)$ are listed in that discussion. Recall
$\alpha/\alpha^\prime=\beta/\beta^\prime \in \{\pm 1, \pm \sqrt{-1}\}$.
Moreover, $y=\alpha\beta$. Thus if $\alpha/\alpha^\prime=\beta/\beta^\prime=\pm \sqrt{-1}$
we obtain $y=-\alpha^\prime\beta^\prime$. However, $y$ is positive and $\alpha^\prime \beta^\prime$
is also positive in all cases. Thus $\alpha/\alpha^\prime=\beta/\beta^\prime=\pm 1$.
Hence $y=\alpha^\prime \beta^\prime=(a-b)/4$. When $(a,b)=(1,-7)$, $(13,-3)$, $(3,-5)$,
we have $y=2$, $4$, $2$ respectively. This contradicts our assumption that
$C_1 C_2 \not \equiv 7 \pmod{8}$. We are reduced to the case where $p=7$,
and $(a,b)=(1,-19)$, $(5,-7)$, $(14,-22)$, which respectively give
$y=5$, $3$, $9$. This completes the proof.

\bigskip

We note in passing that it is not possible to eliminate the cases
$p=7$, $y=5$, $3$, $9$. For example, for $p=7$, $y=5$, there are
$59893$ possibilities for a triple $(C_1,C_2,x)$ which satisfies
$C_1 x^2+C_2=y^p=5^7$ and all our other restrictions.

\section{Effectively Determining Solutions}\label{sec:comp}
Let $C_1$, $C_2$ satisfy condition (a) of Section~\ref{sec:descent}.
Theorem~\ref{thm:main} gives a list of possible odd prime
exponents $n=p$ for which \eqref{eqn:main1} might have solutions.
As noted in the introduction, we may without loss of generality
suppose that $n=p$ is an odd prime, or that $n=4$.
In this section, we outline a practical method to compute 
these solutions for fixed such value of $n$.
We consider three cases.

\noindent \textbf{Case I:} $n$ is an odd prime $p \nmid h_K$, and if $p=3$
then $C_1 C_2/3$ is not a square.
In this case the conditions (a)--(d) 
of Section~\ref{sec:descent} are all satisfied.
Let $r$, $s$ be as in \eqref{eqn:delta1},~\eqref{eqn:delta2}.
Let
\[
d^\prime=\begin{cases}
d & \text{if $-c \not \equiv 1 \pmod{4}$}\\
2d & \text{if $-c \equiv 1 \pmod{4}$}.
\end{cases}
\]
From \eqref{eqn:dd} we obtain $s \mid d^\prime$. Thus we have
only a few possibilities for $s$. To determine
the solutions we merely have to determine the possible
values of $r$ corresponding to each $s \mid d^\prime$.
We shall write down an explicit polynomial $f_s \in \Z[X]$
whose integer roots contain all the possible values of $r$
corresponding to $s$.

Fix $s \mid d^\prime$. 
If $-c \not \equiv 1 \pmod{4}$, we let
\[
f_s(X)=\frac{ (X+s\sqrt{-c})^p - (X-s\sqrt{-c})^p}{2s \sqrt{-c}}
\; - \; \frac{d \cdot C_1^{(p-1)/2}}{s}.
\]
Clearly $f_s \in \Z[X]$. Moreover,
\[
f_s(r)  =
\frac{ \delta^p - \overline{\delta}^p}{\delta-\overline{\delta}}
\; - \; \frac{d \cdot C_1^{(p-1)/2}}{s}
=0
\]
using \eqref{eqn:subconj} and \eqref{eqn:delta1}.

If $-c \equiv 1 \pmod{4}$, we let
\[
f_s(X)=\frac{ (X+s\sqrt{-c})^p - (X-s\sqrt{-c})^p}{2s \sqrt{-c}}
\; - \; \frac{2^{p} \cdot d \cdot C_1^{(p-1)/2}}{s}.
\]
Again $f_s \in \Z[X]$ and
\[
f_s(r)  =
\frac{ (2\delta)^p - (2\overline{\delta})^p}{2(\delta-\overline{\delta})}
\; - \; \frac{2^p \cdot d \cdot C_1^{(p-1)/2}}{s}
=0
\] 
using \eqref{eqn:subconj} and \eqref{eqn:delta2}.

\bigskip

\noindent \textbf{Case II:} $n$ is an odd prime $p$,
with either $p \mid h_K$, or $p=3$
and $C_1 C_2/3$ is a square. In this case we explain how to
reduce \eqref{eqn:main} to a finite number of Thue equations.
These can be solved using standard methods for Thue equations
such as in \cite{Smart}.
As in the proof of Lemma~\ref{lem:fact}, 
write $C_1=p_1 \cdots p_r$ and let $\fp_i$
be the unique prime ideal of $\OK$ above $p_i$. 
Let $\fa=\fp_1 \cdots \fp_r$.
We have 
\[
(C_1 x+ d\sqrt{-c})\OK  
 =  \fa \cdot \fy^p 
\]
where $\fy$ is an ideal of $\OK$. Let $\fb_1,\dotsc,\fb_h$
be ideals of $\OK$ that form a system of representatives for the class group.
Then, for some $1 \le i \le h=h_K$, we have $\fy \fb_i$ is principal.
Therefore $\fa \fb_i^{-p}$ must be principal. We test 
the ideals $\fa \fb_i^{-p}$ for principality. Fix $i$ such that
$\fa \fb_i^{-p}=\epsilon \OK$ where $\epsilon \in K^*$
and write $\fy\fb_i=\delta \OK$, where $\delta \in \OK$. Then
\begin{equation}\label{eqn:prethue}
C_1 x+ d\sqrt{-c} = \mu \cdot \epsilon \cdot \delta^p
\end{equation}
where $\mu$ is a unit. If $p \ne 3$ or $C_1 C_2/3$
is a non-square, then $\mu$ is a $p$-th power
and we can absorb this in the the $\delta^p$ factor. In this
case we suppose $\mu=1$. Otherwise we also consider
$\mu=1$, $\omega=(-1+\sqrt{-3})/2$ and $\omega^2$.
We write $\delta$ as in \eqref{eqn:delta1}, \eqref{eqn:delta2}
depending on whether $-c \not\equiv 1 \pmod{4}$ or $-c \equiv 1 \pmod{4}$.
We then expand \eqref{eqn:prethue} and equate the coefficients
of $\sqrt{-c}$ and clear denominators to obtain an equation 
of the form
\[
F(r,s)=t
\]
where $t$ is a positive integer, and $F \in \Z[X,Y]$
is a homogeneous polynomial of degree $p \ge 3$. 
This is a Thue equation. In our implemention
we used Magma's inbuilt Thue solver which is an implementation of 
the algorithm in Smart's book \cite[Chapter VII]{Smart},
which is based on linear forms in logarithms. 

\bigskip

\noindent \textbf{Case III:} $n=4$. We write
\[
X=C_1 y^2, \qquad Y=C_1^2 xy,
\]
and note that $(X,Y)$ is now an integral point on the elliptic curve
\[
Y^2=X^3-C_1^2 C_2 X.
\]
We apply Magma's inbuilt function for determining
integral points on elliptic curves which is 
based on linear forms in elliptic logarithms,
as described in Smart's book \cite[Chapter XIII]{Smart}.

\newpage
\section{Solutions} \label{sec:solns}
We are interested in solving \eqref{eqn:main1}
for $2 \le C_1 \le 10$, $1 \le C_2 \le 80$
subject to the restrictions: $C_1$ is squarefree, $\gcd(C_1,C_2)=1$, and  
$C_1C_2 \not \equiv 7 \pmod{8}$. As noted previously, we may without loss
of generality suppose that $n=4$ or that $n=p$ is an odd prime.
For each such pair $(C_1,C_2)$, Theorem~\ref{thm:main} yields
a finite set $S(C_1,C_2)$ of odd primes $p$ for which we need to solve \eqref{eqn:main}.
Thus for each such pair $(C_1,C_2)$ we need only solve \eqref{eqn:main1} for $n \in S(C_1,C_2) \cup \{4\}$, and 
for each such value $n$ we may apply one of the methods explained in Section~\ref{sec:comp}.
We implemented our approach in \texttt{Magma} \cite{Magma}. The results of our computation
are given below.

\medskip

\begin{tabular}{||c|c|c|c|c||}
\hline
$C_1$ & $C_2$ & $x$ & $y$ & $n$\\
\hline\hline
$ 2 $ & $ 1 $ & $ 11 $ & $ 3 $ & $ 5 $ \\ \hline
$ 2 $ & $ 5 $ & $ 13 $ & $ 7 $ & $ 3 $ \\ \hline
$ 2 $ & $ 7 $ & $ 19 $ & $ 9 $ & $ 3 $ \\ \hline
$ 2 $ & $ 13 $ & $ 68 $ & $ 21 $ & $ 3 $ \\ \hline
$ 2 $ & $ 13 $ & $ 41 $ & $ 15 $ & $ 3 $ \\ \hline
$ 2 $ & $ 19 $ & $ 1429 $ & $ 21 $ & $ 5 $ \\ \hline
$ 2 $ & $ 19 $ & $ 33 $ & $ 13 $ & $ 3 $ \\ \hline
$ 2 $ & $ 19 $ & $ 2 $ & $ 3 $ & $ 3 $ \\ \hline
$ 2 $ & $ 23 $ & $ 122 $ & $ 31 $ & $ 3 $ \\ \hline
$ 2 $ & $ 25 $ & $ 1 $ & $ 3 $ & $ 3 $ \\ \hline
$ 2 $ & $ 25 $ & $ 134 $ & $ 33 $ & $ 3 $ \\ \hline
$ 2 $ & $ 27 $ & $ 7 $ & $ 5 $ & $ 3 $ \\ \hline
$ 2 $ & $ 31 $ & $ 5 $ & $ 3 $ & $ 4 $ \\ \hline
$ 2 $ & $ 43 $ & $ 10 $ & $ 3 $ & $ 5 $ \\ \hline
$ 2 $ & $ 47 $ & $ 17 $ & $ 5 $ & $ 4 $ \\ \hline
$ 2 $ & $ 49 $ & $ 4 $ & $ 3 $ & $ 4 $ \\ \hline
$ 2 $ & $ 53 $ & $ 423 $ & $ 71 $ & $ 3 $ \\ \hline
$ 2 $ & $ 53 $ & $ 6 $ & $ 5 $ & $ 3 $ \\ \hline
$ 2 $ & $ 55 $ & $ 441 $ & $ 73 $ & $ 3 $ \\ \hline
$ 2 $ & $ 55 $ & $ 12 $ & $ 7 $ & $ 3 $ \\ \hline
$ 2 $ & $ 73 $ & $ 2 $ & $ 3 $ & $ 4 $ \\ \hline
$ 2 $ & $ 79 $ & $ 1 $ & $ 3 $ & $ 4 $ \\ \hline
$ 3 $ & $ 8 $ & $ 21 $ & $ 11 $ & $ 3 $ \\ \hline
$ 3 $ & $ 10 $ & $ 27 $ & $ 13 $ & $ 3 $ \\ \hline
\end{tabular}
\begin{tabular}{||c|c|c|c|c||}
\hline
$C_1$ & $C_2$ & $x$ & $y$ & $n$\\
\hline\hline
$ 3 $ & $ 17 $ & $ 6 $ & $ 5 $ & $ 3 $ \\ \hline
$ 3 $ & $ 35 $ & $ 186 $ & $ 47 $ & $ 3 $ \\ \hline
$ 3 $ & $ 43 $ & $ 10 $ & $ 7 $ & $ 3 $ \\ \hline
$ 3 $ & $ 43 $ & $ 712 $ & $ 115 $ & $ 3 $ \\ \hline
$ 3 $ & $ 73 $ & $ 72 $ & $ 25 $ & $ 3 $ \\ \hline
$ 3 $ & $ 80 $ & $ 639 $ & $ 107 $ & $ 3 $ \\ \hline
$ 5 $ & $ 1 $ & $ 4 $ & $ 3 $ & $ 4 $ \\ \hline
$ 5 $ & $ 7 $ & $ 2 $ & $ 3 $ & $ 3 $ \\ \hline
$ 5 $ & $ 14 $ & $ 37 $ & $ 19 $ & $ 3 $ \\ \hline
$ 5 $ & $ 16 $ & $ 43 $ & $ 21 $ & $ 3 $ \\ \hline
$ 5 $ & $ 22 $ & $ 1 $ & $ 3 $ & $ 3 $ \\ \hline
$ 5 $ & $ 23 $ & $ 8 $ & $ 7 $ & $ 3 $ \\ \hline
$ 5 $ & $ 61 $ & $ 2 $ & $ 3 $ & $ 4 $ \\ \hline
$ 5 $ & $ 61 $ & $ 54 $ & $ 11 $ & $ 4 $ \\ \hline
$ 5 $ & $ 61 $ & $ 326 $ & $ 27 $ & $ 4 $ \\ \hline
$ 5 $ & $ 61 $ & $ 326 $ & $ 81 $ & $ 3 $ \\ \hline
$ 5 $ & $ 76 $ & $ 1 $ & $ 3 $ & $ 4 $ \\ \hline
$ 5 $ & $ 76 $ & $ 487 $ & $ 33 $ & $ 4 $ \\ \hline
$ 6 $ & $ 1 $ & $ 20 $ & $ 7 $ & $ 4 $ \\ \hline
$ 6 $ & $ 17 $ & $ 45 $ & $ 23 $ & $ 3 $ \\ \hline
$ 6 $ & $ 19 $ & $ 51 $ & $ 25 $ & $ 3 $ \\ \hline
$ 6 $ & $ 29 $ & $ 4 $ & $ 5 $ & $ 3 $ \\ \hline
$ 6 $ & $ 29 $ & $ 185 $ & $ 59 $ & $ 3 $ \\ \hline
$ 6 $ & $ 31 $ & $ 19 $ & $ 13 $ & $ 3 $ \\ \hline
\end{tabular}
\begin{tabular}{||c|c|c|c|c||}
\hline
$C_1$ & $C_2$ & $x$ & $y$ & $n$\\
\hline\hline
$ 6 $ & $ 71 $ & $ 3 $ & $ 5 $ & $ 3 $ \\ \hline
$ 6 $ & $ 71 $ & $ 378 $ & $ 95 $ & $ 3 $ \\ \hline
$ 6 $ & $ 73 $ & $ 390 $ & $ 97 $ & $ 3 $ \\ \hline
$ 7 $ & $ 13 $ & $ 4 $ & $ 5 $ & $ 3 $ \\ \hline
$ 7 $ & $ 20 $ & $ 53 $ & $ 27 $ & $ 3 $ \\ \hline
$ 7 $ & $ 20 $ & $ 1 $ & $ 3 $ & $ 3 $ \\ \hline
$ 7 $ & $ 22 $ & $ 59 $ & $ 29 $ & $ 3 $ \\ \hline
$ 7 $ & $ 29 $ & $ 10 $ & $ 9 $ & $ 3 $ \\ \hline
$ 7 $ & $ 38 $ & $ 21 $ & $ 5 $ & $ 5 $ \\ \hline
$ 7 $ & $ 53 $ & $ 2 $ & $ 3 $ & $ 4 $ \\ \hline
$ 7 $ & $ 58 $ & $ 9 $ & $ 5 $ & $ 4 $ \\ \hline
$ 7 $ & $ 62 $ & $ 3 $ & $ 5 $ & $ 3 $ \\ \hline
$ 7 $ & $ 68 $ & $ 5 $ & $ 3 $ & $ 5 $ \\ \hline
$ 7 $ & $ 71 $ & $ 92 $ & $ 39 $ & $ 3 $ \\ \hline
$ 7 $ & $ 74 $ & $ 1 $ & $ 3 $ & $ 4 $ \\ \hline
$ 7 $ & $ 78 $ & $ 85 $ & $ 37 $ & $ 3 $ \\ \hline
$ 10 $ & $ 17 $ & $ 1 $ & $ 3 $ & $ 3 $ \\ \hline
$ 10 $ & $ 29 $ & $ 77 $ & $ 39 $ & $ 3 $ \\ \hline
$ 10 $ & $ 31 $ & $ 83 $ & $ 41 $ & $ 3 $ \\ \hline
$ 10 $ & $ 37 $ & $ 122 $ & $ 53 $ & $ 3 $ \\ \hline
$ 10 $ & $ 41 $ & $ 2 $ & $ 3 $ & $ 4 $ \\ \hline
$ 10 $ & $ 43 $ & $ 350 $ & $ 107 $ & $ 3 $ \\ \hline
$ 10 $ & $ 71 $ & $ 1 $ & $ 3 $ & $ 4 $ \\ \hline
$ 10 $ & $ 73 $ & $ 22 $ & $ 17 $ & $ 3 $ \\ \hline
\end{tabular}


\end{document}